\documentclass[12pt,a4paper,leqno]{article}
\usepackage{amsmath,amsfonts,amssymb,amscd}
\usepackage[margin=2cm]{geometry}

\usepackage{theorem}
\usepackage{url}
\usepackage{hyperref}

\theoremstyle{change}
\theorembodyfont{\sl}
\newtheorem{Theorem}[subsection]{Theorem}

\newtheorem{Lemma}[subsection]{Lemma}

\theorembodyfont{\rmfamily}
\newtheorem{Definition}[subsection]{Definition}
\newtheorem{Remark}[subsection]{Remark}

\makeatletter
\let\c@equation=\c@subsection

\makeatother

\relpenalty=10000
\binoppenalty=10000

\newenvironment{prf}[1]{\trivlist
\item[\hskip \labelsep{\it
#1\hspace*{.3em}}]}{~\hspace{\fill}~$\square$\endtrivlist}

\newenvironment{proof}{\begin{prf}{\bf Proof}}{\end{prf}}

\newcommand{\ol}{\overline}

\newcommand{\C}{\mathbb C}
\newcommand{\F}{\mathbb F}
\newcommand{\Fbar}{\ol{\F}}

\newcommand{\Q}{\mathbb Q}
\newcommand{\Qbar}{\ol{\Q}}
\newcommand{\Z}{\mathbb Z}
\newcommand{\Zbar}{\ol{\Z}}
\newcommand{\A}{\mathbb A}
\renewcommand{\P}{\mathbb P}
\newcommand{\calO}{\mathcal{O}}
\newcommand{\End}{\mathrm{End}}
\newcommand{\Aut}{\mathrm{Aut}}
\newcommand{\GL}{\mathrm{GL}}
\newcommand{\SL}{\mathrm{SL}}
\newcommand{\Spec}{\mathrm{Spec}}
\newcommand{\pr}{\mathrm{pr}}
\newcommand{\Gal}{\mathrm{Gal}}
\newcommand{\Pic}{\mathrm{Pic}}
\newcommand{\red}{\mathrm{red}}
\newcommand{\sep}{\mathrm{sep}}
\newcommand{\eps}{\varepsilon}

\title{A mod $p$ variant of the Andr\'e--Oort conjecture}
\author{Bas  Edixhoven  \& Rodolphe Richard \\
\small{
 \href{mailto:edix@math.leidenuniv.nl}{edix@math.leidenuniv.nl} \quad
 \href{mailto:rodolphe.richard@normalesup.org}{rodolphe.richard@normalesup.org}} 
}

\begin{document}

\maketitle

\begin{abstract}
We state and prove a variant of the Andr\'e--Oort conjecture for the
product of 2 modular curves in positive characteristic, assuming GRH
for quadratic fields.
\end{abstract}

\section{Introduction}


\begin{Definition}\label{def_CM_point}
For a point $x$ in a scheme $X$ we let $\kappa(x)=\calO_{X,x}/m_x$ be
its residue field, and we denote $\iota_x\colon\Spec(\kappa(x))\to X$
the induced $\kappa(x)$-point of~$X$. So we may view $\iota_x$ as an
element of~$X(\kappa(x))$, the set of $\kappa(x)$-valued points
of~$X$. For $X=\A^2$, we have $X(\kappa(x))=\kappa(x)^2$.

By \emph{CM-point in $\A^2_\Q$} we mean a closed point $s$ of the
affine plane over~$\Q$, such that both coordinates of
$\iota_s\in\kappa(s)^2$ are $j$-invariants of CM elliptic curves.

By \emph{CM-point in $\A^2_\Z$} we mean the closure in $\A^2_\Z$ of a
CM-point in~$\A^2_\Q$. We view such a CM-point $\ol{\{s\}}$ as a
closed subset, or as a reduced closed subscheme. For any prime number
$p$ we then denote by $\ol{\{s\}}_{\F_p}$ the reduced fibre over~$p$
and call it the reduction of~$s$ at~$p$.
\end{Definition}

\begin{Theorem}\label{Theorem_main}
  Assume the generalised Riemann hypothesis for quadratic fields.  Let
  $p$ be a prime number. Let $\Sigma$ be a set of finite closed
  subsets $s$ of $\A^2_{\F_p}$ that are reductions of CM-points
  in~$\A^2_\Z$. Let $Z$ be the Zariski closure of the union of all $s$
  in~$\Sigma$. Then every irreducible component of dimension $1$ of
  $Z$ is special: a fibre of one of the 2 projections, or an
  irreducible component of the image in $\A^2_{\F_p}$ of some
  $Y_0(n)_{\F_p}$ with $n\in\Z_{\geq 1}$.
\end{Theorem}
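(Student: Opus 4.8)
The plan is to reduce the statement to the complex‑multiplication case of the classical André–Oort conjecture for $\A^2_\Q$ (a theorem of Edixhoven under GRH for quadratic fields), by ``going up'' from characteristic~$p$ via Deuring's theory of reduction of CM elliptic curves, and then, if necessary, transposing Edixhoven's Hecke‑correspondence argument to characteristic~$p$. First I would isolate a single irreducible component $C$ of $Z$ with $\dim C=1$. Since each $s\in\Sigma$ is finite, $\bigcup_{s\in\Sigma}s$ is a set of closed points, dense in $Z$; removing the finitely many other components of $Z$ and its finitely many $0$‑dimensional components, a standard argument shows that $C$ itself already meets $\bigcup_{s\in\Sigma}s$ in infinitely many closed points. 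If $C$ is a fibre of a projection we are done, so assume not; then both projections $C\to\A^1_{\F_p}$ are finite and surjective. The supersingular $j$‑invariants in $\Fbar_p$ form a finite set (contained in $\F_{p^2}$); hence if infinitely many of those points of $C$ had a coordinate equal to a supersingular $j$‑invariant, one such value would occur infinitely often and $C$ would be the corresponding fibre of a projection, a contradiction. So, up to discarding finitely many points, every point $x=(\ol\jmath_1,\ol\jmath_2)$ of $C$ coming from $\Sigma$ has both coordinates ordinary: by Deuring the corresponding curves $\ol E_1,\ol E_2$ over $\Fbar_p$ have $\End(\ol E_k)$ an order $\calO_k$ in an imaginary quadratic field $K_k$ in which $p$ splits with conductor prime to $p$, and $x$ is the reduction, at a fixed place above $p$, of the CM‑point given by the Serre–Tate canonical lifts of $\ol E_1,\ol E_2$.

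The heart of the matter is that membership in $\Sigma$ drags a whole Galois orbit into $Z$. If $x\in\ol{\{s_0\}}_{\F_p}$ for a CM‑point $s_0$ in $\A^2_\Z$, then — $\Gal(\Qbar/\Q)$ acting transitively on the primes above $p$ — the \emph{entire} set $\ol{\{s_0\}}_{\F_p}$ is the reduction, at one fixed place, of the $\Gal(\Qbar/\Q)$‑orbit of $s_0$, and this set lies in $Z$. Under GRH the relevant class numbers are $\gg|d|^{1/2-\eps}$, so these sets are large; using that reduction of ordinary CM curves is $\Pic(\calO)$‑equivariant (again Deuring), $\ol{\{s_0\}}_{\F_p}$ surjects onto the full set of ordinary $j$‑invariants with endomorphism ring $\calO_1$ (resp.\ $\calO_2$), and in particular infinitely many orders occur, with discriminants tending to infinity. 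Distributing these large sets over the finitely many positive‑dimensional components of $Z$ (only $O(1)$ of their points can land on the $0$‑dimensional part), some one‑dimensional component receives, for infinitely many $s_0$, a positive proportion of $\ol{\{s_0\}}_{\F_p}$; after this harmless pigeonhole one may take that component to be $C$. Thus $C$ contains, for infinitely many orders $\calO$ of discriminant $|d|\to\infty$, $\gg|d|^{1/2-\eps}$ points whose first coordinates form a positive proportion of a single $\Pic(\calO)$‑orbit, realised as reductions of Galois conjugates of one CM‑point.

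Now I would run an Edixhoven‑type Hecke argument in characteristic~$p$. Given $\calO$ of discriminant $d$, GRH (effective Chebotarev, Bach's bound) supplies a prime $\ell\neq p$, split in $K=\mathrm{Frac}(\calO)$ and prime to the conductor, with $\ell=O((\log|d|)^2)$. The reduction $T_\ell$ of the modular correspondence on $\A^2_{\F_p}$ has ``degree'' $O(\ell)$, and on $\Fbar_p$‑points it sends an ordinary $j$ with $\End=\calO$ to its two horizontal $\ell$‑isogenous neighbours $\mathfrak l\star j,\ \ol{\mathfrak l}\star j$ (again with $\End=\calO$) and $\ell-1$ neighbours one level down the $\ell$‑volcano. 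By $\Pic$‑equivariance of reduction and class field theory, the ``diagonal'' shift by $[\mathfrak l]$ is induced by a single element of $\Gal(\Qbar/\Q)$ and therefore permutes $\ol{\{s_0\}}_{\F_p}$; feeding the large set of the previous paragraph through $T_\ell$ one gets that, up to finitely many points, it is contained in $C\cap T_\ell(Z)$, so a Bézout‑type bound gives $\gg|d|^{1/2-\eps}\le O\bigl(\ell\cdot\deg Z\cdot\deg C\bigr)=O\bigl((\log|d|)^2\bigr)$ unless $C$ is a component of $T_\ell(Z)$. For $|d|$ large the inequality is absurd, so $C$ is $T_\ell$‑related to a component of $Z$; letting $(\calO,\ell)$ vary over the infinitely many orders above and using that $Z$ has finitely many components, one concludes that $C$ is, for infinitely many primes $\ell$, in $T_\ell$‑correspondence with one fixed component of $Z$. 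A curve in $\A^2$ that is not a fibre of a projection and sits in infinitely many such Hecke correspondences is (also over $\F_p$, where the Frobenius and Verschiebung graphs $\{x=y^p\},\{y=x^p\}$ appearing in the Kronecker congruence $\Phi_p\equiv(x-y^p)(x^p-y)$ are themselves components of the image of $Y_0(p)_{\F_p}$) a component of the image of some $Y_0(n)_{\F_p}$ — which is what we want.

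The step I expect to be the real obstacle is the ``going up'' of the second paragraph, together with the component‑bookkeeping of the third. A single reduction of a CM‑point is tautologically the reduction of its own canonical lift, so the argument has no content unless one genuinely exploits that each $s\in\Sigma$ carries in the reduction of an entire Galois orbit and that $Z$ is only one‑dimensional; one must then control precisely which component of $Z$ absorbs the mass (this is why one should either invoke Edixhoven's theorem after literally lifting to $\A^2_\Q$, or juggle all components of $Z$ simultaneously as above), handle the case $K_1\neq K_2$ — where realising the Hecke shift on both coordinates by one Galois element requires understanding the compositum of the two ring class fields — and keep track of the volcano levels and of the degenerations of $Y_0(\ell)$ and $Y_0(p)$ in characteristic~$p$. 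GRH is used both in Edixhoven's input and, more essentially, through the effective Chebotarev/Bach bounds that make the auxiliary split prime $\ell$ polylogarithmic in the discriminant, which is exactly what closes the Bézout inequality.
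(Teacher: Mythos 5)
Your overall architecture --- pigeonhole onto a one-dimensional component $C$, produce a small split prime $\ell$ by effective Chebotarev under GRH so that the size of a Galois orbit (Siegel) overwhelms a B\'ezout bound for $C$ against a Hecke translate of $Z$, then conclude that $C$ sits in infinitely many Hecke correspondences --- is the same skeleton as the paper's (Theorem~\ref{thm:primes} and the concluding induction). Two smaller issues first. The B\'ezout bound for $Z\cap(T_\ell\times T_\ell)Z$ in $(\P^1\times\P^1)_{\F_p}$ is $2d_1d_2(\ell+1)^2$, i.e.\ quadratic in $\ell$, so your displayed inequality $\gg|d|^{1/2-\eps}\le O(\ell\cdot\deg Z\cdot\deg C)$ has the wrong power of $\ell$: since you must apply $T_\ell$ to both coordinates (to move both CM $j$-invariants inside the Galois orbit), the correct correspondence is $T_\ell\times T_\ell$ and the intersection number is $O(\ell^2)$; the argument survives, but the bookkeeping must be fixed. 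And the alternative you float, ``lift to $\A^2_\Q$ and invoke Edixhoven's theorem,'' is not available: a curve $C\subset\A^2_{\F_p}$ has no canonical lift to characteristic zero, and there is no lift through which the relevant CM reductions would all factor. This is exactly why the paper works throughout in characteristic~$p$.

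The serious gap is the final sentence of your third paragraph. You assert, with no argument, that a curve in $\A^2_{\F_p}$ that is not a fibre of a projection and lies in infinitely many Hecke correspondences $T_\ell\times T_\ell$ must be a component of the image of some $Y_0(n)_{\F_p}$. That statement is Theorem~\ref{thm:gl+Zt}, and it is where the real work of the paper is. One passes to the generic point of $C$, gets elliptic curves $E_1,E_2$ over the function field $K$, and studies, for each good $\ell$, the image $G_\ell\subset\GL(V_{\ell,1})\times\GL(V_{\ell,2})$ of $\Gal(K^\sep/K)$ on $\ell$-torsion. Reducibility of $(T_\ell\times T_\ell)(C)$ rules out $G_\ell\supset\SL(V_{\ell,1})\times\SL(V_{\ell,2})$; Goursat's lemma together with the classification of normal subgroups of $\SL_2(\F_\ell)$ and the Schreier--van der Waerden theorem on $\Aut(\SL_2(\F_\ell)/\{\pm1\})$ then force $G_\ell$ to be, projectively, the graph of a conjugation, hence $V_{\ell,1}\cong V_{\ell,2}\otimes\eps_\ell$ for a quadratic character $\eps_\ell$ of $\Gal(K^\sep/K)$; a ramification argument (Kummer theory for $p\neq2$, semistable reduction and SGA~7 for $p=2$) shows only finitely many $\eps_\ell$ occur, so after one fixed quadratic twist all the mod-$\ell$ representations of $E_1$ and $E_2$ agree for infinitely many $\ell$; and only then does Zarhin's theorem --- the positive-characteristic, function-field analogue of Tate's isogeny theorem --- produce an actual isogeny $E_1\to E_2$, hence a $Y_0(n)$. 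None of this is implicit in ``sits in infinitely many Hecke correspondences,'' and in characteristic~$p$ there is no softer substitute (complex-analytic or monodromy-in-characteristic-zero arguments do not apply). Without this step the proof is incomplete.
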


\begin{Remark}
  If $K_1,\ldots,K_n$ are quadratic subfields of~$\Qbar$, then GRH
  holds for their compositum $K$ if and only if it holds for each
  quadratic subfield of~$K$ (the zeta function of~$K$ is the product
  of the Riemann zeta-function and the $L$-functions of the quadratic
  subfields of~$K$).
\end{Remark}

\section{Some facts on CM elliptic curves}
\label{sec:2}
We will need some results on CM elliptic curves and their reduction
mod~$p$. For more detail see~\cite[\S2]{Edixhoven1}, and references
therein. 

For $E$ over $\Qbar$ an elliptic curve with CM, $\End(E)$ is an order
in an imaginary quadratic field~$K$, hence isomorphic to $O_{K,f}=\Z+f
O_K$, with $O_K$ the ring of integers in $K$, and $f\in\Z_{\geq 1}$,
unique, called the conductor. 

For $K\subset \Qbar$ imaginary quadratic and $f\geq 1$, we let
$S_{K,f}$ be the set of isomorphism classes of $(E,\alpha)$, where $E$
is an elliptictic curve over $\Qbar$ and $\alpha\colon
O_{K,f}\to \End(E)$ is an isomorphism, such that the action of
$\End(E)$ on the tangent space of $E$ at $0$ induces the given
embedding $K\to\Qbar$. The group $\Pic(O_{K,f})$ acts on $S_{K,f}$,
making it a torsor. This action commutes with the action of
$G_K:=\Gal(\Qbar/K)$, giving a group morphism $G_K\to\Pic(O_{K,f})$
through which $G_K$ acts on~$S_{K,f}$. This map is surjective,
unramified outside~$f$, and the Frobenius element at a maximal ideal
$m$ of $O_K$ outside $f$ is the class $[m^{-1}]$ in~$\Pic(O_{K,f})$.

For $f'\geq1$ dividing $f$, the inclusion $O_{K,f}\to O_{K_{f'}}$
induces compatible surjective maps of groups $\Pic(O_{K,f})\to
\Pic(O_{K,f'})$ and of torsors $S_{K,f}\to S_{K,f'}$: $(E,\alpha)$ is
mapped to $O_{K,f'}\otimes_{O_{K,f}}E$ with its $O_{K,f'}$-action. In
terms of complex lattices: $O_{K,f'}\otimes_{O_{K,f}}\C/\Lambda =
\C/O_{K,f'}\Lambda$.

For $p$ a prime number, and $f'$ the prime to $p$ part of~$f$, the map
$S_{K,f}\to S_{K,f'}$ is the quotient by the inertia subgroup at any
of the maximal ideals $m$ of $O_K$ containing~$p$ (to show this, use
the adelic description of ramification in class field theory).

Elliptic curves with CM over $\Qbar$ extend uniquely over $\Zbar$ (the
integral closure of $\Z$ in~$\Qbar$), and their endomorphisms as
well. 

For $K$ and $f$ as above we define $j_{K,f}$ to be the image of
$j(E)\colon \Spec(\Zbar)\to \A^1_\Z$, where $E$ is an elliptic curve
over $\Zbar$ with $\End(E)$ isomorphic to~$O_{K,f}$; this does not
depend on the choice of~$E$. Then $j_{K,f}$ is an irreducible closed
subset of $\A^1_\Z$. We equip it with its reduced induced scheme
structure. Then it is finite over $\Z$ of degree~$\#\Pic(O_{K,f})$,
and in fact $j_{K,f}(\Zbar)$ is in bijection with $S_{K,f}$ and hence
is a $\Pic(O_{K,f})$-torsor (here we use that $K$ has a given
embedding into~$\Qbar$).  For $p$ prime, we let $j_{K,f,\F_p}$ be the
fibre of $j_{K,f}$ over~$\F_p$.

Let $p$ be a prime number, and $K$ and $f$ as above. If $p$ is not
split in $O_K$ then $j_{K,f,\F_p}$ consists of supersingular points,
and $j_{K,f}$ can be highly singular above~$p$ (by lack of
supersingular targets). If $p$ is split in $O_K$ then $j_{K,f,\F_p}$
consists of ordinary points, and the corresponding elliptic curves
over $\Fbar_p$ have endomorphism ring isomorphic to $O_{K,f'}$, where
$f'$ is the prime to $p$ part of~$f$, and then
$j_{K,f',\F_p}=(j_{K,f,\F_p})_\red$, and for each morphism of rings
$\Zbar\to\Fbar_p$ the map $j_{K,f'}(\Zbar)\to j_{K,f'}(\Fbar_p)$ is a
bijection and it makes $j_{K,f',\F_p}(\Fbar_p)$ into a
$\Pic(O_{K,f'})$-torsor. Note that every ordinary $x$ in $\Fbar_p$
belongs to exactly one~$j_{K,f'}(\Fbar_p)$.


\section{Some facts about pairs of CM elliptic curves}

Let $s$ be a CM-point in $\A^2_\Q$ as in Def.~\ref{def_CM_point}. Then
$s(\Qbar)$ is a $G_\Q$-orbit. Let $(x_1,x_2)$ be in $s(\Qbar)$. Then
$x_1$ is in $j_{K_1,f_1}(\Qbar)$ for a unique imaginary quadratic
subfield $K_1$ of $\Qbar$, and similarly for~$x_2$, and $G_{K_1K_2}$
acts through $\Pic(O_{K_1,f_1})\times\Pic(O_{K_2,f_2})$, and
$s(\Qbar)$ decomposes into at most $4$ orbits under~$G_{K_1K_2}$.

Let $p$ be a prime. Let $s$ be a finite closed subset of $\A^2_{\F_p}$
that is the reduction at $p$ of a CM-point in~$\A^2_\Z$ (see
Def.~\ref{def_CM_point}). Then $s(\Fbar_p)$ is a finite subset of
$\Fbar_p\times \Fbar_p$ that is stable under
$G_{\F_p}:=\Gal(\Fbar_p/\F_p)$. For each of the $2$ projections, the
image of $s(\Fbar_p)$ consists entirely of ordinary points or entirely
of supersingular points (this follows from the facts recalled
in~\S\ref{sec:2}). If for all $(x_1,x_2)$ in $s(\Fbar_p)$ both $x_1$
and $x_2$ are ordinary, then the $x_1$ form a
$\Pic(O_{K_1,f_1})$-orbit, and the $x_2$ form a
$\Pic(O_{K_2,f_2})$-orbit, with $f_1$ and $f_2$ prime to~$p$.

\section{Images under suitable Hecke correspondences}
For $\ell$ a prime number, $T_\ell$ denotes the correspondence on the
$j$-line, over any field not of characteristic~$\ell$, sending an
elliptic curve $E$ over an algebraically closed field $k$ to the sum
of its $\ell+1$ quotients by the subgroups of $E(k)$ of order~$\ell$.
Similarly, $T_\ell\times T_\ell$ is the correspondence on the $j$-line times
itself that sends a pair of elliptic curves $(E_1,E_2)$ to the sum of
all $(E_1/C_1,E_2/C_2)$ with $C_1$ and $C_2$ subgroups of
order~$\ell$.

\begin{Theorem}\label{thm:primes}
  Assumptions as in Theorem~\ref{Theorem_main}, and assume that all
  irreducible components of $Z$ are of dimension~$1$, and are not a
  fibre of any of the $2$ projections. There are infinitely many prime
  numbers $\ell$ such that $Z\cap (T_\ell\times T_\ell)Z$ is of
  dimension~$1$.
\end{Theorem}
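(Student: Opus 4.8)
The plan is to use the structure of $Z$ as a union of reductions of CM-points, combined with the description of those reductions from Section~3 and the torsor/Galois structure from Section~2. The key point is that we have control of $Z$ via a set $\Sigma$ whose elements are reductions of CM-points, and we want to find many primes $\ell$ for which applying $T_\ell\times T_\ell$ moves $Z$ into itself along a one-dimensional locus.

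First I would reduce to a single irreducible component $C$ of $Z$ of dimension~$1$, which by hypothesis is not a fibre of either projection, so both projections $C\to\A^1_{\F_p}$ are dominant (hence finite, since $C$ is a curve in the plane). Then I would pick, inside $C$, infinitely many of the CM-point reductions $s\in\Sigma$ whose union is Zariski-dense in $C$; for each such $s$, pick a point $(x_1,x_2)\in s(\Fbar_p)$. By the facts in Section~3, each $x_i$ is ordinary or supersingular uniformly over $s(\Fbar_p)$, and in the ordinary case the $x_i$ fill out a $\Pic(O_{K_i,f_i})$-orbit. Here is where GRH enters: one wants, for a given imaginary quadratic $K$ and conductor $f$ prime to $p$, a prime $\ell$ (with $\ell\neq p$) that is \emph{split} in $K$, lies in the right ray class so that the Frobenius/the ideal above $\ell$ is trivial in $\Pic(O_{K,f})$ (equivalently the ideal above $\ell$ is principal generated by an element $\equiv 1$ appropriately), and moreover $\ell$ is small enough or numerous enough that one can do this uniformly over the (a priori infinitely many) fields $K$ and conductors $f$ occurring among the $s\in\Sigma$ lying on $C$. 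GRH for quadratic fields gives effective Chebotarev, hence a prime $\ell$ in any given ray class with $\ell$ polynomially bounded in the discriminant and conductor; the point is that such an $\ell$, being split with principal prime ideal of the right type, acts on the torsor $j_{K,f'}(\Fbar_p)$ as a Hecke operator $T_\ell$ that, up to multiplicity, maps the orbit to itself.

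Next, the core geometric step: for such an $\ell$, and for each ordinary point $x_i$ in the relevant orbit, one of the $\ell+1$ $\ell$-isogenous $j$-invariants appearing in $T_\ell(x_i)$ is again in the same $\Pic(O_{K_i,f_i'})$-orbit (namely the one obtained by dividing by the kernel of the isogeny corresponding to the prime above $\ell$, and also by its conjugate — two of the $\ell+1$ subgroups are the ``CM'' ones). Hence for infinitely many $s\in\Sigma$ on $C$ and for a point $(x_1,x_2)\in s(\Fbar_p)$, the point $(x_1',x_2')$ obtained by the two CM isogenies lies in $(T_\ell\times T_\ell)(x_1,x_2)$ and again in some reduction of a CM-point contained in $Z$ — in fact, by the torsor structure, in a point of $s(\Fbar_p)$ or of a nearby $s'$. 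Therefore $(x_1',x_2')\in Z\cap (T_\ell\times T_\ell)Z$. Since this happens for a Zariski-dense set of such points (as $s$ ranges over a dense family in $C$ and $C$ is a curve), $Z\cap (T_\ell\times T_\ell)Z$ contains a dense subset of $C$, hence contains $C$, hence is of dimension~$1$. To get this for infinitely many $\ell$ rather than just one, note that the ray class group of $K$ modulo $f$ (and the level structure needed) admits infinitely many split primes realising the needed class, and effective Chebotarev under GRH lets us produce a sequence $\ell_1<\ell_2<\cdots$ all of which work simultaneously for the finitely many residue characteristics and the relevant orders; alternatively one argues that each individual good $\ell$ already suffices and there is no obstruction to there being infinitely many.

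The main obstacle I expect is the uniformity in $K$ and $f$: the curve $C$ may contain reductions of CM-points attached to infinitely many distinct imaginary quadratic fields $K_1$ (with $K_2$ possibly varying too) and unbounded conductors, so a single prime $\ell$ must be arranged to be ``CM-good'' for all of them, or else one must extract a subfamily on which $K_1,K_2$ (or at least the relevant splitting behaviour at $\ell$) are controlled. Resolving this is exactly what GRH-powered effective Chebotarev buys: the bound on the least split prime in a prescribed ray class is polynomial in the modulus, and one exploits that the supersingular case can only occur for finitely many $\ell$ (the supersingular locus in each fibre is finite) so that, away from those, one is always in the ordinary torsor situation where the Hecke action is transparent. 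A secondary technical point is checking that the ``CM isogeny'' image $(x_1',x_2')$ really lands in $Z$ and not merely in the plane — this is handled by observing that $(x_1',x_2')$ lies in the reduction of another CM-point whose $\Q$-model still sits on (the Zariski closure defining) $Z$, because translating by a principal ideal class of the right type preserves the $G_{K_1K_2}$-orbit, hence the closed subset $s$.
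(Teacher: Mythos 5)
There is a genuine gap, and it is precisely at the point you flag as your ``main obstacle.'' You attempt to fix a single prime $\ell$ and then produce a Zariski-dense set of points of $C$ inside $Z\cap(T_\ell\times T_\ell)Z$, by arranging that $\ell$ be split (even principal) in the CM orders attached to a dense family of $s\in\Sigma$. This cannot be done: the imaginary quadratic fields $K_1,K_2$ and conductors attached to the $s$ lying on $C$ vary without bound, and for any fixed $\ell$ there are infinitely many imaginary quadratic fields in which $\ell$ is inert. GRH/effective Chebotarev gives you, for each fixed $K,f$, a \emph{small} split prime; it does not give you one prime split in infinitely many unrelated orders. So the assertion ``$Z\cap(T_\ell\times T_\ell)Z$ contains a dense subset of $C$, hence contains $C$'' is not justified, and the sentence where you claim GRH ``resolves'' the uniformity is where the argument breaks.

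The paper's proof inverts the quantifiers: $\ell$ is allowed to depend on $s$. For each (all but finitely many) $s$, effective Chebotarev under GRH plus Siegel's lower bound for class numbers produce a prime $\ell=\ell_s$ split in both $O_{1,s}$ and $O_{2,s}$ with the two size constraints $\# s(\Fbar_p) > 2d_1d_2(\ell+1)^2$ and $\ell>\log\#s(\Fbar_p)$. Then the canonical-lift/Frobenius argument (no principality of $\ell$ needed; $[m_i]\tilde E_i$ is simply a $G_{K_1K_2}$-conjugate of $\tilde E_i$, hence still lies in the same orbit) shows $s(\Fbar_p)\subset Z(\Fbar_p)\cap(T_\ell\times T_\ell)Z(\Fbar_p)$. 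The decisive step you are missing is an intersection-number count in $(\P^1\times\P^1)_{\F_p}$: the bidegree of $(T_\ell\times T_\ell)Z$ is $((\ell+1)^2 d_1,(\ell+1)^2 d_2)$, so if the intersection with $Z$ were zero-dimensional it would have at most $2d_1d_2(\ell+1)^2$ points, contradicting $\# s(\Fbar_p) > 2d_1d_2(\ell+1)^2$. Thus a \emph{single} large $s$ suffices to force the intersection to be of dimension~$1$; there is no need for a dense family of $s$ to share one $\ell$. Finally, infinitude of usable $\ell$ comes not from a ray-class argument but from the constraint $\ell>\log\#s(\Fbar_p)$: as $s$ ranges over the infinite $\Sigma$, $\#s(\Fbar_p)\to\infty$, hence the associated $\ell_s$ are unbounded.

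Two secondary points. First, insisting that $\ell$ lie in the trivial class of $\Pic(O_{K,f})$ is both unnecessary and costly (it shrinks the Chebotarev target set by a factor of the class number, which is exactly the quantity you need to be \emph{large} for the counting to work). Second, your worry that $(x_1',x_2')$ might fail to land in $Z$ is handled cleanly in the paper: the image point is a reduction of a genuine $G_\Q$-conjugate of the original CM-lift, hence lies in $s(\Fbar_p)\subset Z(\Fbar_p)$ on the nose, not merely in some ``nearby'' $s'$.
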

\begin{proof}
There are only finitely many points $(x_1,x_2)$ in $Z(\Fbar_p)$ such
that $x_1$ or $x_2$ is not ordinary. Therefore we can replace $\Sigma$
by its subset of $s$'s whose images under both projections are ordinary.

At this point we combine the arguments of~\cite{Edixhoven2} with
reduction modulo~$p$.  Let $d_1$ and $d_2$ be the degrees of the
projections from $Z$ to $\A^1_{\F_p}$.

For $s$ in $\Sigma$ and $(x_1,x_2)$ in~$s(\Fbar_p)$, let $O_{1,s}$ and
$O_{2,s}$ be the endomorphism rings of the elliptic curves $E_1$ and
$E_2$ over $\Fbar_p$ corresponding to $x_1$ and~$x_2$.

We claim that for all but finitely many $s$ there is a prime number
$\ell$ such that $\ell$ is split in both $O_{1,s}$ and $O_{2,s}$, and
$\# s(\Fbar_p) > 2d_1d_2(\ell+1)^2$, and $\ell > \log(\#
s(\Fbar_p))$. This claim follows, as in the proof of
\cite[Lemma~7.1]{Edixhoven2}, from the (conditional) effective
Chebotarev theorem of Lagarias and Odlyzko \cite{Lag-Odl} as stated in
Theorem~4 of~\cite{Serre_chebotarev}, and Siegel's theorem on class
numbers of imaginary quadratic fields, \cite{Siegel} and
\cite[Ch.~XVI]{Lang_ANT}.

Now let $s$, $(x_1,x_2)$ and $\ell$ be as in the claim above. Let
$\varphi\colon\Zbar\to\Fbar_p$ be a morphism of rings. Then there are
unique embeddings of $O_{1,s}$ and $O_{2,s}$ into $\Zbar$ that
composed with $\varphi$ give the actions on the tangent spaces at~$0$
of $E_1$ and~$E_2$. Let $m$ be a maximal ideal of index $\ell$ in
$O_{1,s}O_{2,s}\subset\Zbar$, and $m_1$ and $m_2$ the intersections of
$m$ with $O_{1,s}$ and~$O_{2,s}$. By the facts recalled at the end of
\S~\ref{sec:2}, there are canonical $\tilde{x_1}$ and $\tilde{x_2}$ in
$\Zbar$ lifting $E_1$ and $E_2$ to $\tilde{E}_1$ and $\tilde{E}_2$
with $\End(\tilde{E}_1)=\End(E_1)$ and
$\End(\tilde{E}_2)=\End(E_2)$. Let $\sigma$ be a Frobenius element in
$G_{K_1K_2}$ at~$m$. Then $\tilde{E}_1=[m_1]^{-1}[m_1]\tilde{E}_1$
shows that $\tilde{E}_1$ is $\ell$-isogenous to $[m_1]\tilde{E}_1$
which is the conjugate of $\tilde{E}_1$ by~$\sigma^{-1}$, and
similarly for~$\tilde{E}_2$. Then $([m_1]E_1,[m_2]E_2)$ is the
reduction of $\sigma^{-1}(\tilde{E}_1,\tilde{E}_2)$, hence in
$s(\Fbar_p)$. So $(x_1,x_2)$ is in $(T_\ell\times T_\ell)
([m_1]E_1,[m_2]E_2)$. So $(x_1,x_2)$ is both in $s(\Fbar_p)$ and in
$(T_\ell\times T_{\ell})(s(\Fbar_p))$. We conclude that $s(\Fbar_p)$ is
contained in $Z(\Fbar_p)\cap (T_\ell\times T_{\ell})Z(\Fbar_p)$. Now
the degrees of the projections from $(T_\ell\times T_\ell)Z$ to
$\A^1_{\F_p}$ are $(\ell+1)^2d_1$ and $(\ell+1)^2d_2$, so the
intersection number (in $(\P^1\times\P^1)_{\F_p}$) of $Z$ and
$(T_\ell\times T_\ell)Z$ is $2d_1d_2(\ell+1)^2$. But the intersection
contains $s(\Fbar_p)$, which has more points than this intersection
number, so the intersection is not of dimension~$0$.
\end{proof}

\section{Goursat's lemma and  Zarhin's theorem}
\label{sec:zarhin}
Here we deviate from the topological approach in \cite{Edixhoven1}
and~\cite{Edixhoven2}.
\begin{Theorem}\label{thm:gl+Zt}
  Let $C$ be an irreducible reduced closed curve in $\A^2_{\F_p}$, not
  a fibre of one of the $2$ projections, such that there are
  infinitely many prime numbers $\ell$ for which $(T_\ell\times
  T_\ell)(C)$ is reducible. Then there is an $n\in\Z_{>0}$ such that
  $C$ is the image of an irreducible component of $Y_0(n)_{\F_p}$
  in~$\A^2_{\F_p}$.
\end{Theorem}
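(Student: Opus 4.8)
The plan is to pass to the generic point of $C$ and study the resulting correspondence on function fields. Since $C$ is an irreducible curve in $\A^2_{\F_p}$, not a fibre of either projection, the two projections $C\to\A^1_{\F_p}$ are finite, and I get a function field $L=\F_p(C)$ that is a finite extension of both $\F_p(j_1)$ and $\F_p(j_2)$, where $j_1,j_2$ are the restrictions to $C$ of the two coordinates. The hypothesis that $(T_\ell\times T_\ell)(C)$ is reducible for infinitely many $\ell$ is the key input: for such an $\ell$, applying $T_\ell$ in the first coordinate and $T_\ell$ in the second produces $(\ell+1)^2$ pairs of new $j$-invariants, and reducibility forces a \emph{coupling} between a quotient of $E_1$ by an order-$\ell$ subgroup and a quotient of $E_2$ by an order-$\ell$ subgroup, consistently along $C$. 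Concretely, $C$ being defined by a modular-type relation means the generic pair of elliptic curves $(E_1/C_1, E_2/C_2)$ lies again on $C$ for a matching choice of $C_1,C_2$; I want to leverage this to build an isogeny (over the function field, or a finite extension) between $E_1$ and $E_2$.

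Next I would invoke \textbf{Goursat's lemma}. Consider the generic fibre: $E_1$ and $E_2$ over $L$ (or a finite extension $L'$), together with the Tate modules or the action of $\Gal(L^{\sep}/L')$ on them. The reducibility-for-infinitely-many-$\ell$ condition should be translated into: the image of Galois in $\GL_2(\Z_\ell)\times\GL_2(\Z_\ell)$ (or in the relevant adelic group) is \emph{not} all of the product — it is constrained by a correspondence of subgroups for infinitely many $\ell$. Goursat's lemma then says that a subgroup of $G_1\times G_2$ projecting onto each factor arises from an isomorphism between quotients $G_1/N_1\xrightarrow{\sim} G_2/N_2$. Applied here, this yields a compatible system of isomorphisms between quotients of the $\ell$-adic (or mod-$\ell$) Galois representations attached to $E_1$ and $E_2$. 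The point of bringing in \textbf{Zarhin's theorem} (or rather the isogeny theorem / Tate conjecture for elliptic curves over function fields of positive characteristic, due to Zarhin) is to upgrade this compatible system of Galois-module isomorphisms to an actual isogeny $E_1\to E_2$ over $\ol{L'}$: if the Galois representations on $T_\ell E_1$ and $T_\ell E_2$ share a common quotient of positive rank for infinitely many $\ell$, or more precisely if $\mathrm{Hom}_{\Gal}(V_\ell E_1, V_\ell E_2)\neq 0$, then $\mathrm{Hom}(E_1,E_2)\otimes\Q_\ell\neq 0$, so $E_1$ and $E_2$ are isogenous.

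Once $E_1$ and $E_2$ are isogenous over a finite extension of $L$, by some isogeny of degree $n'$, the classical theory of modular curves gives that the pair $(j_1,j_2)$ — hence the generic point of $C$, hence $C$ itself up to closure — lies on the image of $Y_0(n)_{\F_p}$ for a suitable $n\mid (n')^{\text{something}}$ (one has to be a little careful: a cyclic $n$-isogeny is what $Y_0(n)$ parametrises, while an arbitrary isogeny decomposes; but over $\F_p$ one can also pick up the $p$-power part via Frobenius, which corresponds to the ``Frobenius'' Hecke correspondence, and this is still among the special subvarieties allowed in Theorem~\ref{Theorem_main} — actually here we only need the prime-to-$p$ part to land in $Y_0(n)$, and the Frobenius/$j\mapsto j^p$ components can be absorbed). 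Then $C$, being irreducible and contained in this image, must be one of its irreducible components, which is exactly the conclusion.

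\medskip

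\textbf{The main obstacle} I expect is the passage from ``reducible for infinitely many $\ell$'' to a genuine constraint on the Galois image in $\GL_2\times\GL_2$ that Goursat's lemma can act on. Reducibility of $(T_\ell\times T_\ell)(C)$ is an a priori weak, ``geometric'' statement about one correspondence at a time; I have to show it forces the \emph{same} source of correlation (an isogeny, or at least a Galois-module map) uniformly, rather than, say, a different accidental coincidence for each $\ell$. The standard device is to note that there are only finitely many ``types'' of coupling compatible with the degrees $d_1,d_2$, so infinitely many $\ell$ must realise the same type, and that type — a matching between $\ell$-isogenies of $E_1$ and of $E_2$ — propagates to a consistent $\ell$-adic (then adelic) statement. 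A secondary subtlety is that over a function field of characteristic $p$ one must handle inseparability and the $p$-part carefully (Zarhin's isogeny theorem does hold here, but one works with the prime-to-$p$ Tate modules and then separately accounts for Frobenius twists), and one must make sure the resulting isogeny is compatible with the CM structure inherited from $\Sigma$ so that the final $Y_0(n)$ is defined over $\F_p$ and its image component really is~$C$.
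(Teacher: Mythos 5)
Your overall architecture matches the paper's: pass to the function field $K=\F_p(C)$, take $E_1,E_2$ over $K$ with the two coordinates as $j$-invariants, use Goursat on the mod-$\ell$ Galois images in $\GL(V_{\ell,1})\times\GL(V_{\ell,2})$ to extract a linkage, then invoke Zarhin's isogeny theorem to promote the Galois-module statement to an actual isogeny $E_1\to E_2$. That is exactly the paper's strategy, so the route is the right one.

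However, you have correctly identified the main obstacle and then not resolved it; the paper's proof is precisely the resolution of that obstacle, and your proposed ``standard device'' does not do the job. What Goursat actually yields, for each good $\ell$, is an isomorphism $G_{\ell,1}/N_{\ell,1}\to G_{\ell,2}/N_{\ell,2}$ where $N_{\ell,i}\subset\{\pm1\}$ (the case $N_{\ell,i}=\SL(V_{\ell,i})$ being excluded by reducibility of $(T_\ell\times T_\ell)(C)$). To turn this into a linear map $\gamma_\ell\colon V_{\ell,1}\to V_{\ell,2}$ you need the fact that all automorphisms of $\SL_2(\F_\ell)/\{\pm1\}$ come from $\GL_2(\F_\ell)$ (Schreier--van der Waerden), plus a small lemma (Lemma~\ref{lem:autom-of-ext} in the paper) forcing the resulting automorphism of $G_{\ell,1}/\{\pm1\}$ to be inner. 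The output is then a $\gamma_\ell$ that is Galois-equivariant \emph{only up to a quadratic character} $\eps_\ell\colon G\to\{\pm1\}$, and $\eps_\ell$ a priori depends on $\ell$. Your remark that ``there are only finitely many types of coupling compatible with the degrees $d_1,d_2$'' does not bound the number of $\eps_\ell$: those degrees say nothing about quadratic extensions of $K$. The genuine finiteness argument in the paper is that each $\eps_\ell$ is unramified on the open locus $U\subset C$ of good reduction, and a smooth curve over a finite field has only finitely many quadratic characters unramified on a fixed $U$ (Kummer theory, for $p\neq2$); for $p=2$ one must instead bound conductors via semistable reduction over a fixed cover $\ol C'\to\ol C$ cut out by the $3$-torsion, using the Galois criterion for semistability. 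Only after this finiteness can one pass to an infinite subset of $\ell$'s with a single $\eps$, twist $E_2$ by $\eps$ once and for all, and then feed genuinely Galois-equivariant isomorphisms of $\ell$-torsion (for infinitely many $\ell$) into Zarhin's Cor.~2.7. Without the control on $\eps_\ell$, Zarhin's hypothesis is not met for any single $E_2$ (or twist thereof), and the argument stalls exactly where you flagged it might. A secondary inaccuracy: the final appeal is to the mod-$\ell$ form of Zarhin's theorem over infinitely many $\ell$, not to an $\ell$-adic or adelic statement; and the ``compatibility with the CM structure inherited from $\Sigma$'' you worry about at the end is a red herring — $\Sigma$ plays no role in this theorem.
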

\begin{proof}
Let $K$ denote the function field of~$C$, and let $E_1$ and $E_2$ be
elliptic curves over $K$ with $j$-invariants the projections $\pi_1$
and $\pi_2$, viewed as functions on~$C$; these $E_1$ and
$E_2$ are unique up to quadratic twist. We must prove that $E_1$ is
isogeneous to a twist of~$E_2$.

Let $K\to K^\sep$ be a separable closure and let $G:=\Gal(K^\sep/K)$.
For $\ell\neq p$ a prime number, let $V_{\ell,1}:=E_1(K^\sep)[\ell]$
and $V_{\ell,2}:=E_2(K^\sep)[\ell]$ and let $G_\ell$ be the image of
$G$ in $\GL(V_{\ell,1})\times\GL(V_{\ell,2})$, with projections
$G_{\ell,1}$ and~$G_{\ell,2}$. Because of the Weil pairing, $G$ acts
on $\det(V_{\ell,1})$ and $\det(V_{\ell,2})$ by the cyclotomic
character $\chi_\ell\colon G\to\F_l^\times=\Aut(\mu_\ell(K^\sep))$.
For all but finitely many~$\ell$, $G_{\ell,1}$ contains
$\SL(V_{\ell,1})$ and similarly for~$E_2$ (this follows, as
in~\cite{Co-Ha}, from the fact that for $n$ prime to $p$ the geometric
fibres of the modular curve over $\Z[\zeta_n,1/n]$ parametrising
elliptic curves with symplectic basis of the $n$-torsion are
irreducible, \cite[Thm.~3]{Igusa} and \cite[Cor.~10.9.2]{Katz-Mazur}).
Let $q$ be the number of elements of the algebraic closure of $\F_p$
in~$K$. Then, for all but finitely many $\ell$, $G_{\ell,1}$ is the
subgroup of elements in $\GL(V_{\ell,1})$ whose determinant is a power
of $q$, and similarly for~$G_{\ell,2}$. Let $L$ be the set of prime
numbers $\ell\neq2$ for which $G_{\ell,1}$ and $G_{\ell,2}$ are as in
the previous sentence, and such that $(T_\ell\times T_\ell)(C)$ is
reducible. Then $L$ is infinite.

Let $\ell$ be in~$L$. Let $N_{\ell,1}:=\ker(G_\ell\to G_{\ell,2})$ and
$N_{\ell,2}:=\ker(G_\ell\to G_{\ell,1})$. Then $N_{\ell,i}$ is a
normal subgroup of $G_{\ell,i}\cap\SL(V_{\ell,i})$, and $G_\ell$ is
the inverse image of the graph of an isomorphism
$G_{\ell,1}/N_{\ell,1}\to G_{\ell,2}/N_{\ell,2}$. The only normal
subgroups of $\SL_2(\F_\ell)$ are the trivial subgroups and the center
$\{\pm 1\}$, with different number of elements. As $\# G_{\ell,1}=\#
G_{\ell,2}$, we have $\# N_{\ell,1}=\# N_{\ell,2}$, and so there are 3
cases. 

If $N_{\ell,1}=\SL(V_{\ell,1})$, then $G_\ell$ contains
$\SL(V_{\ell,1})\times\SL(V_{\ell,2})$, contradicting the reducibility
of $(T_\ell\times T_\ell)(C)$. Hence $N_{\ell,1}$ is $\{\pm 1\}$
or~$\{1\}$, and $G_\ell$ gives us an isomorphism 
$\varphi_\ell\colon G_{\ell,1}/\{\pm 1\}\to G_{\ell,2}/\{\pm 1\}$.
As all automorphisms of $\SL_2(\F_\ell)/\{\pm 1\}$ are induced by
$\GL_2(\F_\ell)$ (\cite{Schreier-vdW}, or \cite[\S 3.3.4]{Wilson}),
there is an isomorphism $\gamma\colon V_{\ell,1}\to V_{\ell,2}$ of
$\F_\ell$-vector spaces (not necessarily $G$-equivariant) that
induces the restriction $\varphi_\ell$ from $\SL(V_{\ell,1})/\{\pm
1\}$ to $\SL(V_{\ell,2})/\{\pm 1\}$. Let $\alpha_\ell$ be the
automorphism of $G_{\ell,1}/\{\pm 1\}$ obtained as the composition of
first $\varphi_\ell$ and then $G_{\ell,2}/\{\pm 1\}\to G_{\ell,1}/\{\pm 1\}$,
$g\mapsto \gamma^{-1}g\gamma$. Consider the short exact
sequence 
\[
\{1\}\to \SL(V_{\ell,1})/\{\pm 1\} \to G_{\ell,1}/\{\pm 1\} \to
\langle q\rangle \to \{1\}\,.
\]
Then $\alpha_\ell$ induces the identity on $\SL(V_{\ell,1})/\{\pm 1\}$
and on~$\langle q\rangle$. Lemma~\ref{lem:autom-of-ext} gives us that
$\alpha_\ell$ is the identity. Hence $\varphi_\ell$ is the morphism
$G_{\ell,1}/\{\pm 1\}\to G_{\ell,2}/\{\pm 1\}$, $g\mapsto \gamma
g\gamma^{-1}$. If $N_{\ell,1}=\{1\}$ then $G_\ell$ is
$\Gamma_{\ell,\gamma}:=\{(g,\gamma g\gamma^{-1}) : g\in G_{\ell,1}\}$,
and if $N_{\ell,1}=\{\pm1\}$ then $G_\ell$ is
$\Gamma^\pm_{\ell,\gamma}:= \{(g,\pm\gamma g\gamma^{-1}) : g\in
G_{\ell,1}\}$. This means that $\gamma\colon V_{\ell,1}/\{\pm 1\}\to
V_{\ell,2}/\{\pm 1\}$ is $G$-equivariant. Even better, writing, for
$g$ in $G$, $\gamma(gv)=\eps_\ell(g)g(\gamma(v))$ with
$\eps_\ell(g)\in \{\pm 1\}$, this $\eps_\ell\colon G\to
\{\pm1\}\subset\F_l^\times$ is a character, and $\gamma$ is an
isomorphism from $V_{\ell,1}$ to the twist of $V_{\ell,2}$
by~$\eps_\ell$.

Let $U\subset C$ be the open subscheme where $C$ is regular and where
$E_1$ and $E_2$ have good reduction. Then for all $\ell$ in $L$, and
all closed $x$ in $U$, $\eps_\ell$ is unramified at~$x$. As $U$ is a
smooth curve over a finite field, there are only finitely many
characters $\eps\colon G\to\{\pm1\}$ unramified on~$U$, if $p\neq 2$
(this uses Kummer theory). For $p=2$, one has to be more careful; we
argue as follows. There are infinitely many characters $\eps\colon
G\to\{\pm1\}$ unramified on~$U$, but only finitely many with bounded
conductor on the projective smooth curve $\ol{C}$ with function
field~$K$. Let $K'\subset K^\sep$ be the extension cut out by
$V_{3,1}\times V_{3,2}$, and let $\ol{C}'\to \ol{C}$ be the
corresponding cover. Then both $E_1$ and $E_2$ have semistable
reduction over~$\ol{C}'$ by \cite[Cor.~5.18]{Desch}. The Galois
criterion for semi-stability in \cite[Exp.~IX, Prop.~3.5]{Groth} tells
us that all $\eps_\ell$ become unramified on~$\ol{C}'$. This shows
that also for $p=2$ there are only finitely many distinct~$\eps_\ell$.
The conclusion is that, for general~$p$, there are only finitely many
distinct $\eps_\ell$, and therefore we can assume (by shrinking $L$ to
an infinite subset) that they are all equal to some~$\eps$. Then we
replace $E_2$ by its twist by $\eps$, and then $\eps_\ell$ are
trivial.

Now Zarhin result \cite[Cor.~2.7]{Zarhin} tells us that there is a
non-zero morphism $\alpha\colon E_1\to E_2$. 
\end{proof}

\begin{Remark}
Up to sign, there is a unique isogeny $\alpha\colon E_1\to E_2$ of
minimal degree. Then $C$ is an irreducible component of the image of
$Y_0(n)_{\F_p}$. We write $n=p^km$ with $m$ prime to~$p$. Then $C$ is
the image of the image of $Y_0(m)_{\F_p}$ by the $p^k$-Frobenius map on
the first or on the second coordinate, and $C$ is also an irreducible
component of the images of all $Y_0(p^{2i}n)$ with $i\in\Z_{\geq0}$.
\end{Remark}

\begin{Lemma}\label{lem:autom-of-ext}
Let $G$ be a group, $N$ a normal subgroup of $G$ and $Q$ the
quotient. Let $\alpha$ be an automorphism of $G$ inducing the identity
on $N$ and on $Q$, and suppose that $G$ acts trivially by conjugation
on the center of $N$, and that there is no non-trivial morphism
from $Q$ to the center of~$N$. Then $\alpha$ is the identity on~$G$.
\end{Lemma}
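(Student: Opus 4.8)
The plan is to prove Lemma~\ref{lem:autom-of-ext} by analysing how $\alpha$ can move an element of $G$, given the constraints that it fixes everything both in $N$ and in the quotient $Q$. First I would fix an element $g\in G$ and consider $\alpha(g)g^{-1}$. Since $\alpha$ induces the identity on $Q=G/N$, the image of $\alpha(g)$ in $Q$ equals the image of $g$, so $\alpha(g)g^{-1}\in N$; write $\alpha(g)=c(g)\,g$ with $c(g)\in N$. The goal is to show $c(g)=1$ for all $g$.

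\medskip

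\noindent The next step is to extract the relevant structure of the map $g\mapsto c(g)$. For $n\in N$ we have $\alpha(n)=n$, so $c(n)=1$; thus $c$ is constant on $N$-cosets and only depends on $\bar g\in Q$ — tentatively it descends to a map $Q\to N$. To see it is a homomorphism (into an abelian target), compute $\alpha(gh)=c(gh)gh$ on the one hand and $\alpha(g)\alpha(h)=c(g)g\,c(h)h=c(g)\,(g c(h) g^{-1})\,gh$ on the other; hence $c(gh)=c(g)\cdot{}^{g}c(h)$, a twisted ($1$-)cocycle relation with values in $N$ for the conjugation action of $G$ on $N$. Because $\alpha$ restricts to the identity on $N$, applying $\alpha$ to the conjugation relation $\alpha(g n g^{-1})=n$ forces $\alpha(g)$ to act on $N$ (by conjugation) the same way $g$ does; combined with $\alpha$ being an automorphism, one shows $c(g)$ lies in the set of elements of $N$ acting trivially under conjugation by the relevant subgroup — in particular, using the hypothesis that $G$ acts trivially by conjugation on the center $Z(N)$, one shows $c(g)\in Z(N)$. (The cleanest way: $\alpha(g)$ and $g$ induce the same conjugation automorphism of $N$, so $c(g)=\alpha(g)g^{-1}$ centralises $N$; an element of $N$ centralising $N$ lies in $Z(N)$.) Once $c(g)\in Z(N)$ for all $g$, the twisting $^{g}c(h)$ in the cocycle relation is trivial (by the hypothesis that $G$ acts trivially on $Z(N)$), so $c\colon G\to Z(N)$ is an honest group homomorphism, and since it kills $N$ it factors through a homomorphism $\bar c\colon Q\to Z(N)$.

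\medskip

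\noindent Finally, the hypothesis that there is no nontrivial homomorphism from $Q$ to $Z(N)$ gives $\bar c=1$, hence $c=1$, hence $\alpha(g)=g$ for all $g$. The main obstacle is the middle step: verifying carefully that $c(g)$ lands in $Z(N)$ rather than merely in $N$. This is where one must use that $\alpha|_N=\mathrm{id}$ in a slightly indirect way (to pin down the conjugation action of $\alpha(g)$), together with the triviality of the $G$-action on $Z(N)$; the bookkeeping with inner automorphisms of $N$ and the exact role of each hypothesis needs to be done with some care, but it is elementary group theory with no real surprises.
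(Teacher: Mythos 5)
Your argument is correct and follows the same cocycle method as the paper: write $\alpha(g)$ as $g$ times an error term in $N$, observe the error term factors through $Q$ and lands in $Z(N)$, and then use triviality of the $G$-action on $Z(N)$ to untwist the cocycle into a homomorphism $Q\to Z(N)$, which must vanish. The only cosmetic differences are that the paper writes the error term on the right ($\alpha(g)=g\beta(g)$) and deduces $\beta(g)\in Z(N)$ directly from the cocycle identity with $g_2\in N$, whereas you place it on the left and use the equivalent observation that $\alpha(g)$ and $g$ induce the same inner automorphism of $N$ (note this step needs only $\alpha|_N=\mathrm{id}$, not the hypothesis on the $G$-action on $Z(N)$, which is used solely to untwist the cocycle).
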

\begin{proof}
We write, for all $g\in G$: 
\[
\alpha(g) = g\beta(g),\quad
\text{with $\beta$ a map (of sets!) from $G$ to itself.}
\]
As $\alpha$ induces the identity on $Q$, $\beta$ takes values
in~$N$. As $\alpha$ is the identity on $N$, we have $\beta(n)=1$ for
all $n\in N$. For all $g_1$ and $g_2$ in $G$ we have:
\[
g_1g_2\beta(g_1g_2)=\alpha(g_1g_2)=\alpha(g_1)\alpha(g_2)=
g_1\beta(g_1)g_2\beta(g_2)\,,
\]
and therefore
\[
\beta(g_1g_2) = g_2^{-1}\beta(g_1)g_2\beta(g_2)\,.
\]
For $g_1$ in $N$, this gives that for all $g_2$ in $G$,
$\beta(g_1g_2)=\beta(g_2)$. Hence $\beta$ factors through
$\ol{\beta}\colon Q\to N$: $\beta(g)=\ol{\beta}(\ol{g})$. Now, for
$g_1$ in $G$ and $g_2$ in $N$, we have
\[
\ol{\beta}(\ol{g_1}) = \ol{\beta}(\ol{g_1g_2}) = g_2^{-1}\ol{\beta}(\ol{g_1})g_2\,,
\]
hence $\ol{\beta}$ takes values in the center of~$N$. Now let $g_1$
and $g_2$ be in~$G$. As $g_2^{-1}\beta(g_1)g_2=\beta(g_1)$,
$\ol{\beta}$ is a morphism of groups from $Q$ to the center of $N$ and
therefore trivial.
\end{proof}
\section{Proof of the main theorem}
We are now ready to prove Theorem~\ref{Theorem_main}. 

If $Z=\A^2_{\F_p}$ or is finite, then $Z$ has no irreducible
components of dimension~$1$. Now assume that $Z$ has dimension~$1$. We
write $Z=V\cup H\cup F\cup Z'$ with $V$ the union in~$Z$ of fibers of
the 1st projection $\pr_1$, $H$ the union in~$Z$ of fibers of~$\pr_2$,
and $F$ the set of isolated points in~$Z$, and $Z'$ the union of the
remaining irreducible components of~$Z$. Let $B_1$ be the image of
$V\cup F$ under~$\pr_1$, and $B_2$ the image of $H\cup F$
under~$\pr_2$. 

Let $s$ be in $\Sigma$ such that $\pr_1(s)$ meets~$B_1$. Then either
$\pr_1(s)(\Fbar_p)$ consists of supersingular points, or it consists
of ordinary points with the same endomorphism ring as an ordinary point
in~$B_1$. Hence for such a $\pr_1(s)$ there are only finitely many
possibilities. Similarly for the~$\pr_2(s)$. It follows that the $s$
in $\Sigma$ with $\pr_1(s)$ disjoint from $B_1$ and $\pr_2(s)$
disjoint from $B_2$ are contained in~$Z'$. Let $\Sigma'$ be the set of
these~$s$. The $s$ in $\Sigma-\Sigma'$ lie on a finite union of fibres
of $\pr_1$ and~$\pr_2$, and the intersection of this union with $Z'$
is finite. Therefore the union of the $s$ in $\Sigma'$ is dense
in~$Z'$. We replace $Z$ by $Z'$, and $\Sigma$ by~$\Sigma'$. Then all
irreducible components of $Z$ are of dimension $1$ and are not a fibre
of $\pr_1$ or~$\pr_2$. Let $d_i$ ($i$ in $\{1,2\}$) be the degree of
$\pr_i$ restricted to~$Z$.

There are only finitely many points $(x_1,x_2)$ in $Z(\Fbar_p)$ such
that $x_1$ or $x_2$ is not ordinary. Therefore we can replace $\Sigma$
by its subset of $s$'s whose image under both projections is ordinary.

Theorem~\ref{thm:primes} gives us an infinite set $L$ of primes $\ell$
such that $Z\cap(T_\ell\times T_\ell)Z$ is of dimension~$1$. Let
$(Z_i)_{i\in I}$ be the set of irreducible components of~$Z$. Then for
each $\ell$ in $L$ there are $i$ and $j$ in $I$ such that $Z_i$ is in
$(T_\ell\times T_\ell)Z_j$. If moreover $\ell>12d_1$ then $(T_\ell\times
T_\ell)Z_j$ is reducible, because if not, then $(T_\ell\times
T_\ell)Z_j$ equals $Z_i$ (as closed subsets of~$\A^2_{\F_p}$), but for
any ordinary $(x,y)$ in $Z_j(\Fbar_p)$, $T_\ell(y)$ consists of at
least $(\ell+1)/12>d_1$ distinct points.

There is a $j_0\in I$ such that for infinitely many $\ell\in L$,
$(T_\ell\times T_\ell)Z_{j_0}$ is reducible. Theorem~\ref{thm:gl+Zt}
then tells us that there is an $n\geq1$ such that $Z_{j_0}$ is the
image in $\A^2_{\F_p}$ of an irreducible component of~$Y_0(n)_{\F_p}$.
We let $T(n)$ be the reduced closed subscheme of $\A^2_\Z$ whose
geometric points correspond to pairs $(E_1,E_2)$ of elliptic curves
that admit a morphism $\varphi\colon E_1\to E_2$ of degree~$n$.  Let
$J$ be the set of $j\in I$ such that $Z_j$ is an irreducible component
of~$T(n)_{\F_p}$, let $Z(n)$ be their union, and and let $Z'$ be the
union of the $Z_i$ with $i\not\in J$.

We claim that any $s$ in $\Sigma$ that meets $T(n)_{\F_p}$ is
contained in~$T(n) _{\F_p}$. So let $(j(E_1),j(E_2))$ be in
$s(\Fbar_p)$, and $\varphi\colon E_1\to E_2$ of degree~$n$. Let
$\Zbar\to\Fbar_p$ be a morphism of rings, and
$\tilde{\varphi}\colon\tilde{E}_1\to\tilde{E}_2$ be the canonical lift
over~$\Zbar$. Then $\tilde{\varphi}$ is of degree~$n$, and so are all
its conjugates by~$G_\Q$, and so $s(\Fbar_p)$, consisting of all
reductions of these conjugates, lies in $T(n)(\Fbar_p)$. 

As $T(n)_{\F_p}\cap Z'$ is finite, the set $\Sigma'$ of $s$ in
$\Sigma$ that do not meet $T(n)_{\F_p}$ is dense in $Z'$ and our proof
is finished by induction on the number of irreducible components of~$Z$.

\begin{Remark}
  We think that Theorem~\ref{Theorem_main} remains true if $E\subset
  \Qbar$ is a finite extension of~$\Q$ and we work with
  $\A^2_{\Fbar_p}$ and consider reductions of $G_E$-orbits of
  CM-points in $\A^2(\Zbar)$. However, the case $E=\Q$ has a special
  feature: up to fibres of the projections, the $Z$ are invariant
  under switching the coordinates. This comes from the dihedral nature
  of the Galois action. As soon as $E$ contains an imaginary quadratic
  field, there are $\Sigma$ such that $Z$ consists of one irreducible
  component of~$Y_0(p)_{\F_p}$.
\end{Remark}


\begin{thebibliography}{99}

\bibitem{Co-Ha} A.C.\ Cojocaru, C.\ Hall. \emph{
Uniform results for Serre's theorem for elliptic curves.} 
Int.\ Math.\ Res.\ Not.~2005, no.~50, 3065--3080.
\url{https://doi.org/10.1155/IMRN.2005.3065} 

\bibitem{Desch} M.\ Deschamps. \emph{R\'eduction semi-stable.}
  S\'eminaire sur les pinceaux de courbes de genre au moins
  deux. Ast\'erisque No.~86 (1981), 1--34.

\bibitem{Edixhoven1} B.\ Edixhoven. \emph{Special points on the
    product of two modular curves.} Compositio Math.~114 (1998),
  no.~3, 315--328. 
\url{https://doi.org/10.1023/A:1000539721162}

\bibitem{Edixhoven2} B.\ Edixhoven. \emph{Special points on products
    of modular curves.}  Duke Math.\ J.~126 (2005), no.~2, 325--348.
\url{https://doi.org/10.1215/S0012-7094-04-12624-7}

\bibitem{Igusa} J-I.\ Igusa. \emph{Fibre systems of Jacobian
    varieties.~III. Fibre systems of elliptic curves.} Amer.\ J.\
  Math.~81 1959 453--476. 
\url{https://doi.org/10.2307/2372751}

\bibitem{Katz-Mazur} N.\ M.\ Katz, B.\ Mazur, \emph{Arithmetic moduli
    of elliptic curves.}  Annals of Mathematics Studies,
  108. Princeton University Press, Princeton, NJ, 1985.

\bibitem{Klingler} B.\ Klingler, \emph{Hodge loci and atypical
    intersections: conjectures}, 
\url{https://arxiv.org/abs/1711.09387}

\bibitem{Lag-Odl} J.\ C.\ Lagarias, and A.\ M.\ Odlyzko,
  \emph{Effective versions of the Chebotarev density theorem.}
  Algebraic number fields: L-functions and Galois properties
  (Proc.\ Sympos., Univ.\ Durham, Durham, 1975), 409--464.


\bibitem{Lang_ANT} S.\ Lang, \emph{Algebraic number theory}. Second
  edition. Graduate Texts in Mathematics, 110. Springer-Verlag, New
  York, 1994.
\url{https://link.springer.com/book/10.1007%2F978-1-4612-0853-2}

\bibitem{Schreier-vdW} O.\ Schreier and B.\ L.\ van der Waerden, \emph{Die Automorphismen der
projektiven Gruppen.} Abh.\ Math.\ Sem.\ Univ.\ Hamburg~6 (1928),
303--322.
\url{https://doi.org/10.1007/BF02940620}

\bibitem{Serre_chebotarev} J-P.\ Serre, \emph{Quelques applications du
    th\'eor\`eme de densit\'e de Chebotarev.} Inst. Hautes \'Etudes
  Sci.\ Publ.\ Math. No.~54 (1981), 323--401. 
\url{http://archive.numdam.org/article/PMIHES_1981__54__123_0.pdf}

\bibitem{Serre-IV} J-P.\ Serre, \emph{Oeuvres/Collected
    papers. IV. 1985--1998.} Springer Collected Works in
  Mathematics. Springer, Heidelberg, 2013. 

\bibitem{Groth} SGA~7.1. \emph{Groupes de monodromie en g\'eom\'etrie
    alg\'ebrique. I. S\'eminaire de G\'eom\'etrie Alg\'ebrique du
    Bois--Marie 1967--1969 (SGA~7~I).} Dirig\'e par A.\ Grothendieck. Avec
  la collaboration de M.\ Raynaud et D.S.\ Rim. Lecture Notes in
  Mathematics, Vol.~288. Springer--Verlag, Berlin--New York, 1972.

\bibitem{Siegel} C.\ L.\  Siegel, \emph{ \"Uber die Classenzahl
    quadratischer Zahlk\"orper.} Acta Arithmetica~1.1 (1935):
  83--86. \url{http://eudml.org/doc/205054}. 

\bibitem{Wilson} R.\ A.\ Wilson, \emph{The finite simple groups.}
  Graduate Texts in Mathematics, 251. Springer-Verlag London, Ltd.,
  London, 2009.
\url{https://doi.org/10.1007/978-1-84800-988-2}

\bibitem{Zarhin} Y.\ Zarhin, \emph{Abelian varieties over fields of
    finite characteristic.} Cent.\ Eur.\ J.\ Math.~12 (2014), no.~5,
  659--674. 
\url{https://doi.org/10.2478/s11533-013-0370-1}

\end{thebibliography}
\end{document}